\newcommand{\overbar}[1]{\mkern 1.5mu\overline{\mkern-1.5mu#1\mkern-1.5mu}\mkern 1.5mu}
\newcommand{\cupdot}{\mathbin{\mathaccent\cdot\cup}}
\theoremstyle{definition}
\newtheorem{theorem}{Theorem}[section]
 \title{On the real roots of $\sigma$-Polynomials}
    \author{Jason Brown\footnote{Corresponding author, e-mail: jason.brown@dal.ca} \and Aysel Erey}
    \date{Department of Mathematics and Statistics\\ Dalhousie University \\ Halifax, Nova Scotia, Canada B3H 3J5 \\[\baselineskip] \today }
\begin{document}

\maketitle

\begin{abstract}
The $\sigma$-polynomial is given by $\sigma(G,x) = \sum_{i=\chi(G)}^{n} a_{i}(G)\, x^{i}$,
where $a_{i}(G)$ is the number of partitions of the vertices of $G$ into $i$ nonempty independent sets. These polynomials are closely related to chromatic polynomials, as the chromatic polynomial of $G$ is given by $\sum_{i=\chi(G)}^{n} a_{i}(G)\, x(x-1) \cdots (x-(i-1))$. It is known that the closure of the real roots of chromatic polynomials is precisely $\{0,~1\} \bigcup [32/27,\infty)$, with $(-\infty,0)$, $(0,1)$ and $(1,32/27)$ being maximal zero-free intervals for roots of chromatic polynomials. We ask here whether such maximal zero-free intervals exist for $\sigma$-polynomials, and show that the only such interval is $[0,\infty)$ -- that is, the closure of the real roots of $\sigma$-polynomials is $(-\infty,0]$. 
\end{abstract}

\thanks{\textit{Keywords}: chromatic polynomial, $\sigma$-polynomial, $\sigma$-root, closure, zero-free interval}

\section{Introduction}
Given a (finite undirected) graph $G$, the well known chromatic polynomial $\pi(G,x)$ is, for each nonnegative integer $x=k$, the number of ways to properly colour the vertices of $G$ with the colours $1,2,\ldots,k$ so that no two adjacent vertices receive the same colour. That the function is indeed a polynomial in the number of colours, $x$, is a fundamental result, but as a polynomial, it is natural to investigate about chromatic roots, that is, the roots of chromatic polynomials (indeed, the {\em chromatic number} $\chi(G)$ of graph $G$ is the least positive integer $k$ that is \underline{not} a chromatic root of $G$, and  the famous Four Colour Theorem is equivalent to stating that $4$ is never a chromatic root of a planar graph). There are numerous results on the nature and location of chromatic roots (see, for example, \cite{dongbook}). One of the more interesting problems concerns the real roots of chromatic polynomials. It is well known that the chromatic polynomial is a real polynomial whose coefficients alternate in sign, and hence all real chromatic roots lie in $[0,\infty)$. A basic (and easily proved) fact is that $(0,1)$ is free of chromatic roots, and Jackson \cite{jackson} showed $(1,32/27]$ is a zero-free region for chromatic roots. To complete the picture, Thomassen \cite{thomassen} proved that there are real chromatic roots arbitrarily close to every real greater than or equal to $32/27$, so it follows that the closure of the real chromatic roots is precisely $\{0,~1\} \bigcup [32/27,\infty)$.

We now describe a related family of polynomials. 
An $i$\textit{-colour partition} of $G$ is a partition of the vertices of $G$ into $i$ nonempty independent sets (That is, sets that do not contain any edges of $G$). The {\em $\sigma$-polynomial} of $G$ (see \cite{brenti}) is defined as 
\[ \sigma(G,x) = \sum_{i=\chi(G)}^{n} a_{i}(G)\, x^{i}\]
where $a_{i}(G)$ is the number of $i$-colour partitons of $G$.
The coefficients $a_i$ are also known as the \textit{graphical Stirling numbers}
\cite{duncan,galvin}, as if a graph has no edges, then $a_i$ is simply equal to the Stirling number of the second kind $S(n,i)$. It follows that the $\sigma$-polynomials of empty graphs correspond to the generating functions for Stirling numbers of the second kind and such polynomials were studied \cite{harper,lieb} and shown to have all real roots. 

The $\sigma$-polynomials first arose in the study of chromatic polynomials, since the chromatic polynomial of $G$ can be given as  
\[ \pi(G,x) = \sum a_{i}(G)\, (x)_{\downarrow i},\] 
where $(x)_{\downarrow i} = x(x-1) \cdots (x-i+1)$ is the {\em falling factorial of $x$}. 
The $\sigma$-polynomial was introduced by Korfhage \cite{korfhage} in a slightly different form (he refers to the polynomial $ (\sum_{i=\chi(G)}^n a_i x^i )/ x^{ \chi(G)}$ as the $\sigma$-polynomial), and $\sigma$-polynomials have attracted considerable attention in the literature.

Brenti \cite{brenti} studied the $\sigma$-polynomials extensively and investigated both log-concavity and the nature of the roots. Brenti, Royle and Wagner \cite{royle} proved that a variety of conditions are sufficient for a $\sigma$-polynomial to have only real roots. But a natural question to ask is whether, like chromatic polynomials, there are zero-free regions on the real axis, and moreover, what is the closure of the real $\sigma$-roots (that is, the closure of the roots of $\sigma$-polynomials). A plot of the $\sigma$-roots for all graphs of small order (see Figure~\ref{order7sigmarootsfigure}) seems to indicate that the real roots are filling up the negative real axis, and indeed we shall prove in the next section that this is the case. 

\begin{figure}[htb]
\begin{center}
\includegraphics[scale=1.0]{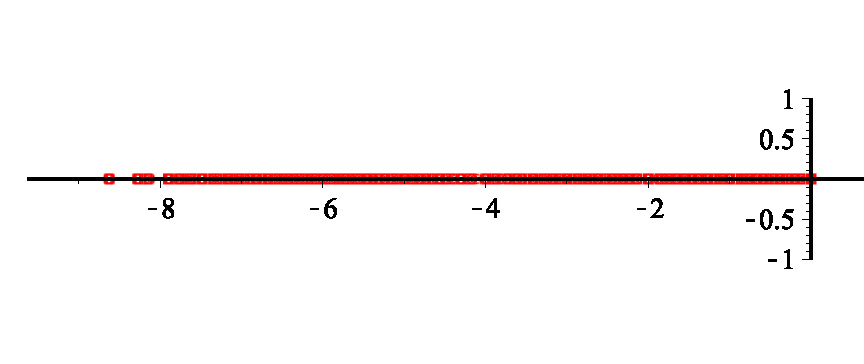}
\caption{The roots of $\sigma$-polynomials of all connected $7$-vertex graphs.}
\label{order7sigmarootsfigure}
\end{center}
\end{figure}


\section{Closure of the real roots of $\sigma$-polynomials}

We will refer to few other classical graph polynomials in our quest to determine the closure of the real $\sigma$-roots.
Given a graph $G$ of order $n$, the \textit{adjacency matrix of G}, $A(G)$, is the $n\times n$ matrix with $(i,j)$-entry equal to $1$ if the $i$-th vertex of $G$ is adjacent to the $j$-th, and equal to $0$ otherwise. The \textit{characteristic polynomial $\phi(G,x)$} of $G$ is defined by
$$\phi(G,x)=\operatorname{det}(xI-A(G)).$$ It is known (see, \cite[pg.~2]{godsilbook}) that if $G$ is a forest then $\phi(G,x)=m(G,x)$, where $m(G,x)$ is the \textit{matching polynomial of G}:
\[ m(G,x)=\sum_{i\geq 0}m_i(G)(-1)^ix^{n-2i},\]
where $m_i(G)$ is the number of matchings of size $i$ in $G$ ($m_0(G)\equiv 1$ by convention).
A famous result (see \cite{godsilbook}) that matching polynomials have all real roots. Also, it is not difficult to see that if $G$ is a triangle-free graph then  $$\sigma(\overbar{G},-x^2)=(-x)^n\, m(G,x).$$

For a sequence $\{f_n(x)\}$  of polynomials, $z$ is called a \textit{limit of roots} of   $\{f_n(x)\}$ if there is a sequence $\{z_n\}$ such that $f_n(z_n)=0$ and $z_n$ converges to $z$.
Let $P_0(x),P_1(x),\dots$ be a sequence of polynomials in $\mathbb{C}[x]$ which satisfy the recursion of degree $k$
\begin{eqnarray}\label{recursion}
P_{n+k}(x)=-\sum_{j=1}^kf_j(x)P_{n+k-j}(x)
\end{eqnarray}
where the $f_j$ are polynomials. The \textit{characteristic equation} of this recursion is 
\begin{eqnarray}\label{chareqn}
\lambda^k+\sum_{j=1}^kf_j(x)\lambda^{k-j}=0.
\end{eqnarray}
Let $\lambda_1(x),\dots ,\lambda_k(x)$ be the roots of the characteristic equation. If the $\lambda_j(x)$ are distinct, then it is well known that the solution of the recursion in \eqref{recursion} has the form
\begin{eqnarray}\label{recursionsoln}
P_n(x)=\sum_{j=1}^k\alpha_j(x)\lambda_j(x)^n.
\end{eqnarray}
(If there are repeated roots values at $x$, \eqref{recursionsoln} is modified in the natural way -- see \cite{beraha}). The $\alpha_j(x)$ are determined in any event by solving the $k$ linear equations in the $\alpha_j(x)$ obtained by letting $n=0,1,\dots, k-1$ in \eqref{recursionsoln} or its variant.
The \textit{nondegeneracy conditions} on the recursive family of polynomials in \eqref{recursion} is defined as follows
\begin{itemize}
\item[(i)] $\{P_n\}$ does not satisfy a recursion of order less than $k$.
\item[(ii)] For no pair $i\neq j$ is $\lambda_i(x)\equiv \omega \lambda_j(x)$ for some $\omega \in \mathbb{C}$ of unit modulus.
\end{itemize}

The following theorem provides the limits of roots of recursive families of polynomials, and will be central to our argument.

\begin{theorem}[Beraha-Kahane-Weiss Theorem \cite{beraha}] Suppose that  $\{P_n(x)\}$ is a sequence of polynomials which  satisfies \eqref{recursion} and the nondegeneracy requirements, and has solution
\[ P_n(x) =\sum_{j=1}^k\alpha_j(x)\lambda_j(x)^n.\]
Then $z$ is a limit of roots of if and only if one of the following holds:
\begin{itemize}
\item[(i)] two or more of the $\lambda_i(x)$ are of equal modulus and strictly greater in modulus than the others (if any).
\item[(ii)] for some $j$, $\lambda_j(x)$ has modulus strictly greater than all the other $\lambda_i(x)$ have and $\alpha_j(z)=0$.
\end{itemize}
\end{theorem}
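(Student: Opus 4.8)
The plan is to fix a point $z \in \mathbb{C}$, work in a small disk about it, and split into three mutually exclusive and exhaustive local scenarios that correspond precisely to the three logical possibilities in the statement: (A) there is a unique root of strictly largest modulus $\lambda_1(z)$ and $\alpha_1(z)\neq 0$ (this is exactly ``neither (i) nor (ii)''); (B) there is a unique dominant root $\lambda_1$ but $\alpha_1(z)=0$ (this is (ii)); and (C) at least two of the $\lambda_j(z)$ share the maximal modulus (this is (i)). I would first record the standing reductions. Since the $\lambda_j(x)$ are the branches of an algebraic function, namely the roots of \eqref{chareqn} whose coefficients $f_j$ are polynomials, and the $\alpha_j(x)$ are obtained by solving a Vandermonde-type linear system in the $\lambda_j$, all of these are analytic off a discrete exceptional set (zeros of the discriminant and of the relevant determinants); on a small disk away from that set one may choose single-valued analytic branches and relabel so that $|\lambda_1(z)|\ge\cdots\ge|\lambda_k(z)|$. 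The nondegeneracy hypotheses feed in here: condition (i) forces every $\alpha_j\not\equiv 0$, since dropping a branch with $\alpha_j\equiv 0$ would exhibit a recursion of order $k-1$; and condition (ii) forces $|\lambda_i(x)/\lambda_j(x)|\not\equiv 1$ for $i\neq j$, since an analytic ratio of constant modulus is a unimodular constant, so each $\log|\lambda_i/\lambda_j|$ is nonconstant. Because the set of limits of roots is closed, the finitely many exceptional points can be absorbed at the end by approximation, so it suffices to argue at generic $z$.

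Scenario (A) gives the ``only if'' direction. Here $P_n(x)=\alpha_1(x)\lambda_1(x)^n\bigl(1+\sum_{j\ge 2}(\alpha_j/\alpha_1)(\lambda_j/\lambda_1)^n\bigr)$, and on a sufficiently small disk $\max_{j\ge 2}|\lambda_j|/|\lambda_1|\le\rho<1$ uniformly while $|\alpha_1|$ is bounded below; hence $P_n/(\alpha_1\lambda_1^n)\to 1$ uniformly and, since $\alpha_1\lambda_1^n$ is zero-free there, Rouch\'e's theorem (or Hurwitz) shows $P_n$ has no zeros near $z$ for large $n$, so $z$ is not a limit of roots. Scenario (B) is the easy half of the ``if'' direction: writing $P_n(x)=\lambda_1(x)^n(\alpha_1(x)+R_n(x))$ with $R_n\to 0$ uniformly, the function $\alpha_1$ is analytic with an \emph{isolated} zero at $z$ (as $\alpha_1\not\equiv 0$), so $|\alpha_1|$ is bounded below on a small circle about $z$; Rouch\'e's theorem then places a zero of $\alpha_1+R_n$, hence of $P_n$, inside that circle for all large $n$, and shrinking the radius yields a sequence of roots converging to $z$.

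Scenario (C) is the heart of the matter and the step I expect to be the main obstacle. I would first reduce the general case of $m\ge 2$ equimodular dominant roots to the case $m=2$: the locus $\{|\lambda_1|=|\lambda_2|=|\lambda_3|\}$ is cut out by two independent real equations in the plane and is therefore generically isolated, so arbitrarily close to $z$ one finds points $z'$ on the analytic curve $\{|\lambda_1|=|\lambda_2|\}$ at which exactly two roots are dominant; producing roots of $P_n$ near such $z'$ and invoking closedness of the limit set then covers $z$. For the core case, on a small disk I set $\phi=\lambda_2/\lambda_1$ (analytic, of nonconstant modulus, with $|\phi(z)|=1$) and reduce $P_n=0$, after dividing by $\lambda_1^n$, to the scalar equation $\phi(x)^n=\psi_n(x)$, where $\psi_n=-(\alpha_1+E_n)/\alpha_2$ is analytic and converges uniformly to the nonzero function $-\alpha_1/\alpha_2$. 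Passing to the logarithm via $u=\phi(x)$ and $s=\log u$, this becomes $T_n(s):=ns-L_n(s)\in 2\pi i\,\mathbb{Z}$ with $L_n=\log\psi_n$ analytic; since $T_n'(s)=n-L_n'(s)\approx n$ for large $n$, the map $T_n$ is a conformal equivalence carrying a fixed small disk about $s_0=\log\phi(z)$ onto a region of diameter of order $n$, which therefore meets the lattice $2\pi i\,\mathbb{Z}$. Pulling back a lattice point produces a genuine zero $x_n$ of $P_n$ with $x_n\to z$, establishing (i) $\Rightarrow$ limit of roots. The delicate points to nail down are the uniform control of the error term $E_n$ (a finite sum bounded by $C(\rho')^{n}\to 0$ for some $\rho'<1$) so that the open-mapping estimate survives the perturbation, the handling of points where $\alpha_1(z)$ or $\alpha_2(z)$ vanishes (where $\psi_n$ acquires a zero or pole but the $T_n$-argument still applies after a harmless shift of the target region), and the bookkeeping that lets the isolated higher-order coincidences and branch points be recovered by the closedness argument.
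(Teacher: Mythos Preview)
The paper does not contain a proof of this theorem. The Beraha--Kahane--Weiss theorem is quoted from \cite{beraha} as a tool and then applied to the recursion for the $P_k$ arising from complete $n$-ary trees; no argument for the theorem itself is given or sketched. So there is nothing in the paper to compare your attempt against.

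As for your sketch on its own merits: the three-scenario decomposition together with the Hurwitz/Rouch\'e arguments in (A) and (B) is the standard line, and is essentially what Beraha--Kahane--Weiss (and later expositions such as Sokal's) do. Your treatment of (C) is also along the right lines---divide by $\lambda_1^n$, pass to the equation $\phi^n=\psi_n$ with $\phi=\lambda_2/\lambda_1$ of nonconstant modulus (here the second nondegeneracy condition enters), and use an open-mapping/winding-number count to hit the lattice $2\pi i\,\mathbb{Z}$. Two places deserve more care if you intend this as a full proof: the reduction from $m\ge 3$ equimodular dominant roots to $m=2$ (your dimension count is heuristic; you need that the real-analytic loci $\{|\lambda_i|=|\lambda_j|\}$ meet in a set with empty interior in the curve $\{|\lambda_1|=|\lambda_2|\}$, which follows from nondegeneracy but should be argued), and the disposition of the finitely many branch/discriminant points, where you cannot simply ``absorb by closedness'' on the \emph{only if} side---one must check separately that such a point can be a limit of roots only if (i) or (ii) already holds there. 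These are exactly the technical wrinkles handled in \cite{beraha}.
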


We are now ready to prove our main result.

\begin{theorem}
The roots of $\sigma$-polynomials are dense in $(-\infty,0)$.
\end{theorem}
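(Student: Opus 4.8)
The plan is to exploit the identity $\sigma(\overbar{G},-x^{2})=(-x)^{n}m(G,x)$, valid for every triangle-free $G$. It suffices to show that every $z<0$ is a limit of $\sigma$-roots. For a triangle-free $G$ and $x\neq 0$ one has $\sigma(\overbar{G},-x^{2})=0$ exactly when $m(G,x)=0$, and $-r^{2}$ runs over $(-\infty,0)$ as $r$ runs over $(0,\infty)$; so the theorem follows once we know that \emph{every positive real is a limit of matching roots of forests}. (Working inside the class of forests is convenient: there $m(G,x)=\phi(G,x)$, so the approximating roots we produce are automatically real, as the statement requires.)

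To hit an arbitrary positive target I would use two families of trees. Small targets come from paths: $m(P_{n})=x\,m(P_{n-1})-m(P_{n-2})$, the roots of $\lambda^{2}-x\lambda+1=0$ have product $1$, so for real $x$ with $|x|<2$ they are non-real complex conjugates of equal modulus, and part (i) of the Beraha-Kahane-Weiss theorem puts all of $(-2,2)$ among the limits of matching roots of paths. Larger targets come from caterpillars: for an integer $t\ge 1$ let $C_{n}^{(t)}$ be the tree obtained from a path $v_{1}\cdots v_{n}$ by attaching $t$ pendant leaves to each $v_{i}$. Iterating the pendant-vertex recursion for matching polynomials yields $m(C_{n}^{(t)},x)=x^{n(t-1)}\widetilde{A}_{n}(x)$ with $\widetilde{A}_{n}(0)\neq 0$ and
\[
\widetilde{A}_{n}=(x^{2}-t)\widetilde{A}_{n-1}-x^{2}\widetilde{A}_{n-2},\qquad \widetilde{A}_{0}=1,\quad \widetilde{A}_{1}=x^{2}-t;
\]
the nonzero matching roots of $C_{n}^{(t)}$ are precisely the zeros of $\widetilde{A}_{n}$.

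Then I would apply the Beraha-Kahane-Weiss theorem to $\{\widetilde{A}_{n}\}$. Its characteristic equation $\lambda^{2}-(x^{2}-t)\lambda+x^{2}=0$ has roots $\lambda_{1}(x),\lambda_{2}(x)$ with $\lambda_{1}\lambda_{2}=x^{2}$, and the nondegeneracy hypotheses hold routinely (the family is genuinely of order $2$ since it is not a geometric progression, and $\lambda_{1}\not\equiv\omega\lambda_{2}$ for unit $\omega$ because $\lambda_{1}/\lambda_{2}$ is non-constant). For real $x>0$ the discriminant $(x^{2}-t)^{2}-4x^{2}$ is negative exactly on $(\sqrt{t+1}-1,\sqrt{t+1}+1)$, where $\lambda_{1},\lambda_{2}$ are non-real conjugates of equal modulus; since there is no third root, part (i) of the theorem makes every such $x$ a limit of roots of $\{\widetilde{A}_{n}\}$, hence of matching roots of $\{C_{n}^{(t)}\}$. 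These intervals all have width $2$, while their centres $\sqrt{t+1}$ increase to infinity with consecutive gaps less than $1$, so they overlap and their union is $(\sqrt{2}-1,\infty)$. Together with $(0,2)$ coming from paths this is all of $(0,\infty)$, which finishes the argument.

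The genuine obstacle, I expect, is picking the family. The natural first guesses---stars, brooms, spiders, complete bipartite graphs---all fail: for each of these the square of the largest matching root grows only linearly in the varying parameter, so after $x\mapsto -x^{2}$ one obtains merely a sparse set of $\sigma$-roots marching off to $-\infty$, not a dense set. The caterpillars $C_{n}^{(t)}$ work precisely because the leaf count $t$ enters the recursion for $\widetilde{A}_{n}$ as a literal shift of the characteristic polynomial, producing an equal-modulus interval of fixed width that can be translated arbitrarily far to the right; once this family is chosen, verifying the recursion and the nondegeneracy conditions is only bounded bookkeeping.
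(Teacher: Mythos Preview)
Your argument is correct. The recursion you state for the caterpillars checks out: deleting the spine endpoint $v_n$ via the vertex form of the matching recursion gives
\[
m(C_n^{(t)},x)=x^{t-1}(x^{2}-t)\,m(C_{n-1}^{(t)},x)-x^{2t}\,m(C_{n-2}^{(t)},x),
\]
which, after stripping the factor $x^{n(t-1)}$, is exactly your $\widetilde{A}_n=(x^{2}-t)\widetilde{A}_{n-1}-x^{2}\widetilde{A}_{n-2}$; the discriminant calculation and the covering of $(0,\infty)$ by $(0,2)\cup\bigcup_{t\ge 1}(\sqrt{t+1}-1,\sqrt{t+1}+1)$ are fine, and the nondegeneracy checks go through.

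The paper reaches the same conclusion by a somewhat cleaner route. Instead of two tree families, it uses a single one-parameter family, the complete $n$-ary trees $T_n^k$, quoting a result of H\'{i}c and Nedela that the eigenvalues of $T_n^k$ contain the zeros of the sequence $P_k(x)=xP_{k-1}(x)-nP_{k-2}(x)$. The associated characteristic equation $\lambda^{2}-x\lambda+n=0$ has equal-modulus roots on the whole interval $[-2\sqrt{n},2\sqrt{n}]$, so Beraha--Kahane--Weiss gives density there, and letting $n\to\infty$ finishes at once. Your path case is literally the $n=1$ instance of this. What the paper's choice buys is economy: the intervals are nested, so no overlap bookkeeping is needed, and only one recursion has to be analysed. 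What your choice buys is self-containment: you derive the caterpillar recursion directly from the pendant-vertex identity for matching polynomials, whereas the paper imports the $n$-ary tree recursion from the literature.
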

\begin{proof}
Since every triangle-free graph $G$ satisfies $\sigma(\overbar{G},-x^2)=(-x)^n\, m(G,x)$, it suffices to show that the roots of matching polynomials of triangle-free graphs are dense in $(-\infty,\infty)$. Obviously, trees are triangle-free graphs. Also, as we already mentioned, the matching polynomials of trees and equal to their characteristic polynomials \cite{godsilbook}. So it suffices to show that the roots of characteristic polynomials of a certain family of trees are dense in $(-\infty,\infty)$.

A tree $T$ is called a \textit{balanced rooted tree with a root w} if $T$ has a vertex $w$ and there exist integers $n_1,n_2,\dots ,n_k$ such that $V(T)\setminus \{w\}$ can be partitioned into $k$ subsets $A_1,A_2,\dots ,A_k$ where $w$ has exactly $n_k$ neighbours in $A_k$ and each vertex in $A_i$ has exactly $n_{i-1}$ neighbours in $A_{i-1}$ for $2\leq i\leq k$. We will denote $T$ by $T(n_k,n_{k-1},\dots, n_1)$. Note that if $n_i=n$ for every $i$ with $1\leq i\leq k$ then $T(n_{k},n_{k-1},\dots, n_1)$ is a complete $n$-ary tree on $\frac{n^{k+1}-1}{n-1}$ vertices and we denote it by $T_n^k$.
In \cite{hic} it was shown that roots of the characteristic polynomial of $T(n_k,n_{k-1},\dots n_1)$ contain the roots of the recursively defined polynomials $P_k(x)$, where
\begin{equation}\label{balancedtreerecursion}
P_j(x)=x\, P_{j-1}(x)-n_j\, P_{j-2}(x)
\end{equation}
for $j=2,\dots,k$; as these roots are roots of characteristic polynomials of graphs, they are always real.

Now let $n$ be a fixed positive integer. By the formula given in \eqref{balancedtreerecursion}, the roots of the characteristic polynomial of $T_n^k$ contain the  roots of the polynomials $P_k(x)$, which are defined recursively as
$$P_k(x)=x\, P_{k-1}(x)-n\,P_{k-2}(x)$$
for $k\geq 2$. The sequence of polynomials $P_2(x),P_3(x),\dots$ satisfying a recursion of degree $2$, with the characteristic equation of this recursion being 
$$\lambda^2-x\lambda+n=0.$$
The roots of this characteristic equation are
$$\lambda_1(x)=\frac{x+\sqrt{x^2-4n}}{2} \quad \text{and} \quad \lambda_2(x)=\frac{x-\sqrt{x^2-4n}}{2} .$$ It is easy to check that the nondegeneracy conditions are satisfied. Therefore, by Beraha-Kahane-Weiss Theorem, every $x\in \mathbb{C}$ satisfying
\begin{equation}\label{sigmadenseprooffirsteqn}
\left| \frac{x+\sqrt{x^2-4n}}{2}\right|= \left|\frac{x-\sqrt{x^2-4n}}{2}\right|
\end{equation}
is a limit of the roots of the sequence $P_2(z),P_3(z),\ldots$. Observe that $x\in \mathbb{C}$ satisfies \eqref{sigmadenseprooffirsteqn} if and only if 
$$\left| x+\sqrt{x^2-4n}\right|= \left|x-\sqrt{x^2-4n}\right|$$
or equivalently,
\begin{equation}\label{sigmadenseproofsecondeqn}
\left| 1+ \frac{\sqrt{x^2-4n}}{x}\right|= \left|1-\frac{\sqrt{x^2-4n}}{x}\right|.
\end{equation}
Now, $x\in \mathbb{C}$ is a solution of \eqref{sigmadenseproofsecondeqn} if and only if $\frac{\sqrt{x^2-4n}}{x}$ is equidistant from $1$ and $-1$, that is, $\frac{\sqrt{x^2-4n}}{x}$ is on the imaginary axis. For $a\in \mathbb{R}$,
 $$\frac{\sqrt{x^2-4n}}{x}=ai \quad \Longleftrightarrow \quad \frac{x^2-4n}{x^2}=-a^2 \quad \Longleftrightarrow \quad  x=\pm 2\sqrt{\frac{n}{1+a^2}}.$$ 
 Thus, $x=\pm 2\sqrt{\frac{n}{1+a^2}}$ is a limit of the roots of $\{P_k(z)\}_{k=2,3,\dots}$ for every real number $a$. Therefore, the roots of $\{P_k(z)\}_{k=2,3,\dots}$ are dense in $[-2\sqrt{n},2\sqrt{n}]$. Now, letting $n\rightarrow \infty$, we obtain that the roots of the characteristic polynomials of complete $k$-ary trees are dense in $\mathbb{R}$.
\end{proof}

\section{Further Remarks}

Of course, our results from the previous section show that the real $\sigma$-roots are unbounded, but one might wonder how far to the left they might go, as a function of the order. From the results of \cite{zhao} (the {\em adjoint polynomial} of a graph is the $\sigma$-polynomial of its complement) it follows that deleting an edge form a graph can only decrease the minimum real $\sigma$-root leftward. Thus the minimum real $\sigma$-root among all graphs of order $n$ occurs for $\overline{K_{n}}$, whose $\sigma$-polynomial is the generating function for the Stirling numbers of the second kind. Weak asymptotics \cite{elbert} have been investigated for the distribution of roots of the scaled $\sigma$-polynomials of such graphs, $\sigma(\overline{K_{n}},nx)$, and we can use the results to show that for every $\varepsilon > 0$ and large enough $n$, there is a real $\sigma$-root of $\overline{K_{n}}$ in the interval $(-ne,-n(1-\varepsilon)e)$ (we omit the details).

\begin{figure}[t]
\begin{center}
\includegraphics[scale=0.75]{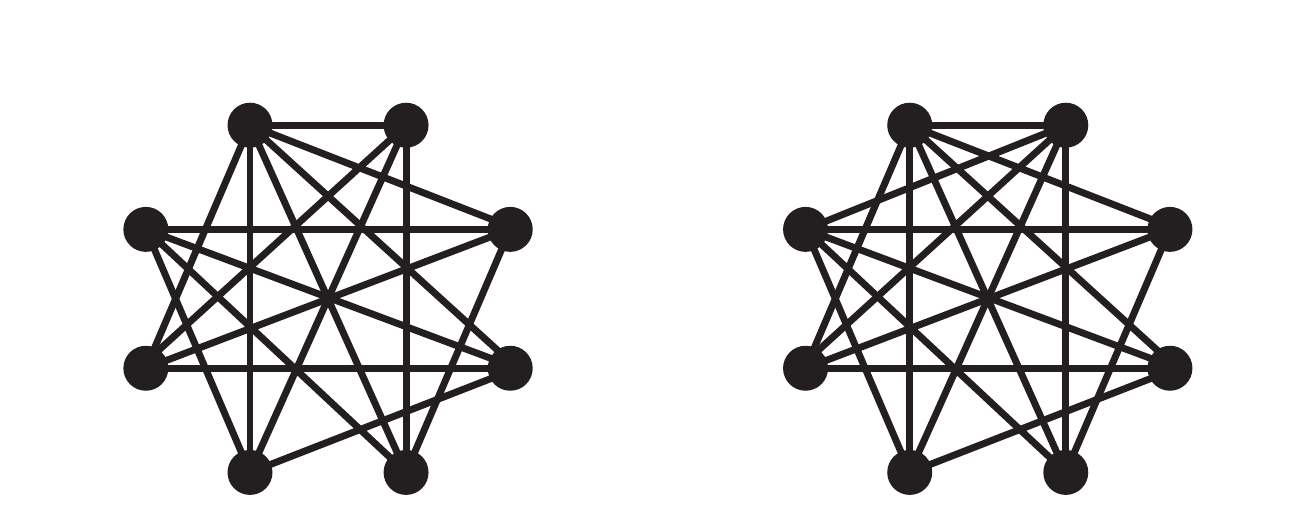}
\caption{The graphs of order $8$ whose $\sigma$-polynomials have nonreal roots.}
\label{smallorder}
\end{center}
\end{figure} 

Have we captured the closure of {\em all} the $\sigma$-roots with our main result? Unfortunately, the answer is no, as there are graphs with nonreal $\sigma$-roots, but they are rare for small graphs -- there are only two such graphs, shown in Figure~\ref{smallorder}, of order at most $8$ \cite{royle}. The situation is not much better for order $9$; among all $274,668$ connected graphs of order $9$, only $42$ have nonreal $\sigma$-roots.
It is not hard to see that the $\sigma$-roots of the join of two disjoint graphs (formed from the disjoint union by adding in all edges between the two graphs) is the union of the $\sigma$-roots of each, so we can use the join operation to produce infinitely many graphs with nonreal $\sigma$-roots, but the set of $\sigma$-roots is rather small (and the roots only have small imaginary part in absolute value). In fact, currently only finitely many nonreal $\sigma$-roots are known. We present a recursive construction of a family of graphs which appear to have distinct nonreal $\sigma$-roots, with larger imaginary part in absolute value.

Let $H_{n,k}^t$ be the graph $K_n$ with a path of size $t$ hanging off $k$ vertices of the clique $K_n$. More precisely, $H_{n,k}^t$ is the graph on $n+kt$ vertices and ${n \choose 2}+kt$ edges whose vertex set is equal to
$$\{u_i : 1\leq i \leq n\}\cupdot \{v_i^j : 1\leq i \leq t, \,  1\leq j \leq k\} $$
and edge set is equal to
$$\{u_iu_j: \, 1\leq i<j\leq n \} \, \cupdot \, \{u_iv_1^i: \, 1\leq i\leq k \} \, \cupdot \, \{v_i^jv_{i+1}^j: \, 1\leq i\leq t-1, \, 1\leq j\leq k\}$$
(see Figure~\ref{Hfamily}).

\begin{figure}[htbp]
\begin{center}
\includegraphics[scale=0.3]{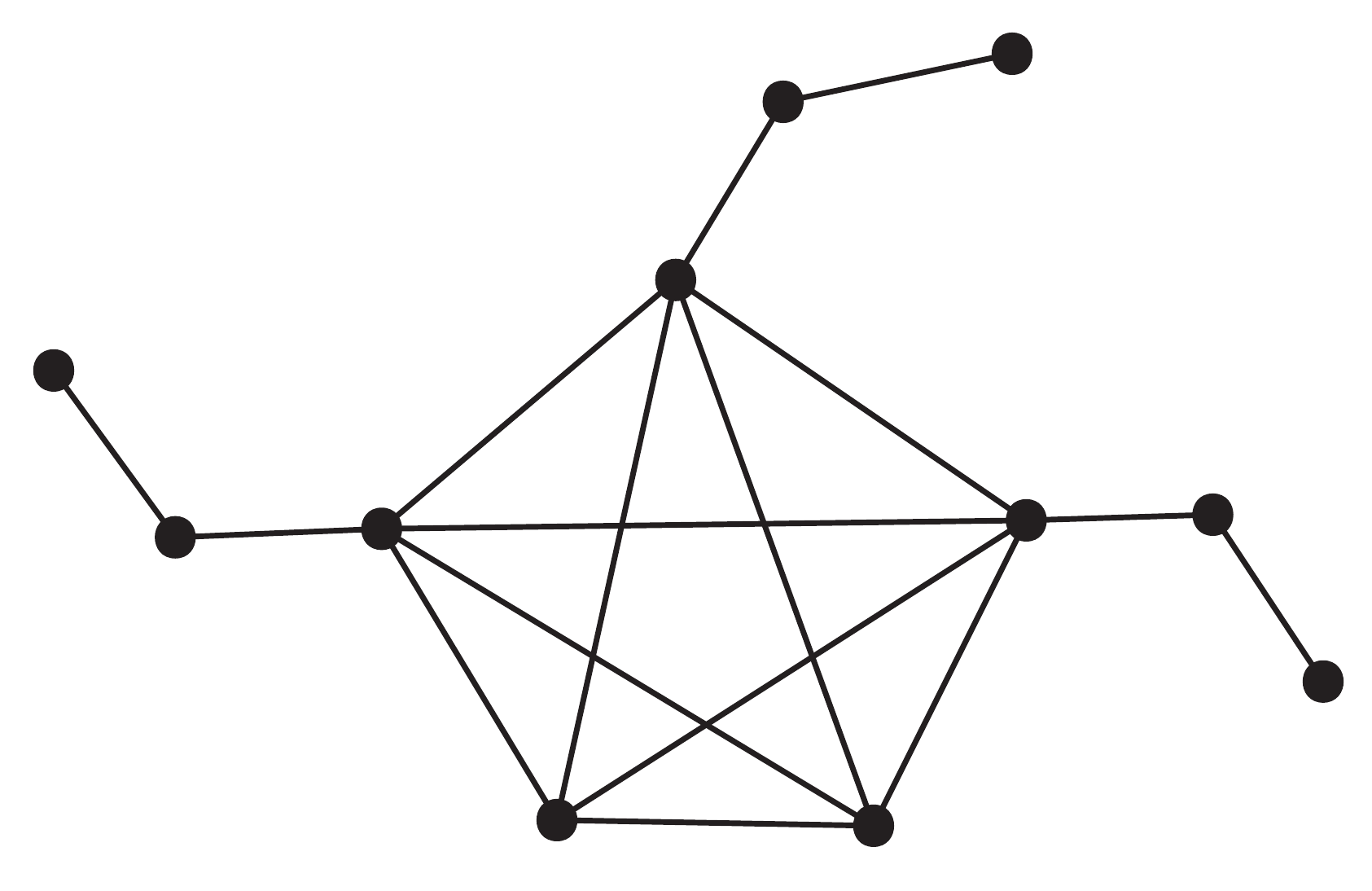}
\caption{The graph $H^2_{5,3}$.}
\label{Hfamily}
\end{center}
\end{figure}

The $\sigma$-roots of such graphs are often nonreal with distinct roots, and can have relatively large imaginary part (see Figures~\ref{nonrealFigureH(n,n,2)},\ref{nonrealFigureH(n,n,3)},\ref{nonrealFigureH(n,n,n)}), although we do not have a proof of this in general. 

\begin{figure}[htbp]
\begin{center}
\includegraphics[scale=1]{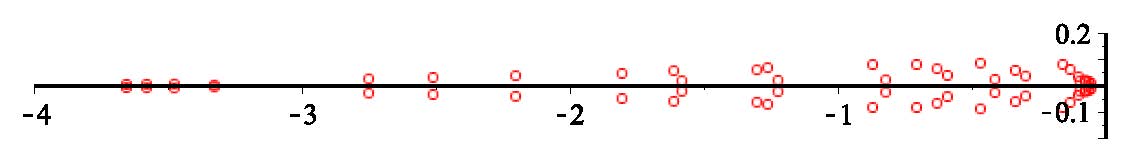}
\caption{Nonreal roots of the adjoint polynomial of $H_{n,n}^n$ for $n=1,\dots , 10$.}
\label{nonrealFigureH(n,n,n)}
\end{center}
\end{figure}

\begin{figure}[htbp]
\begin{center}
\includegraphics[scale=0.5]{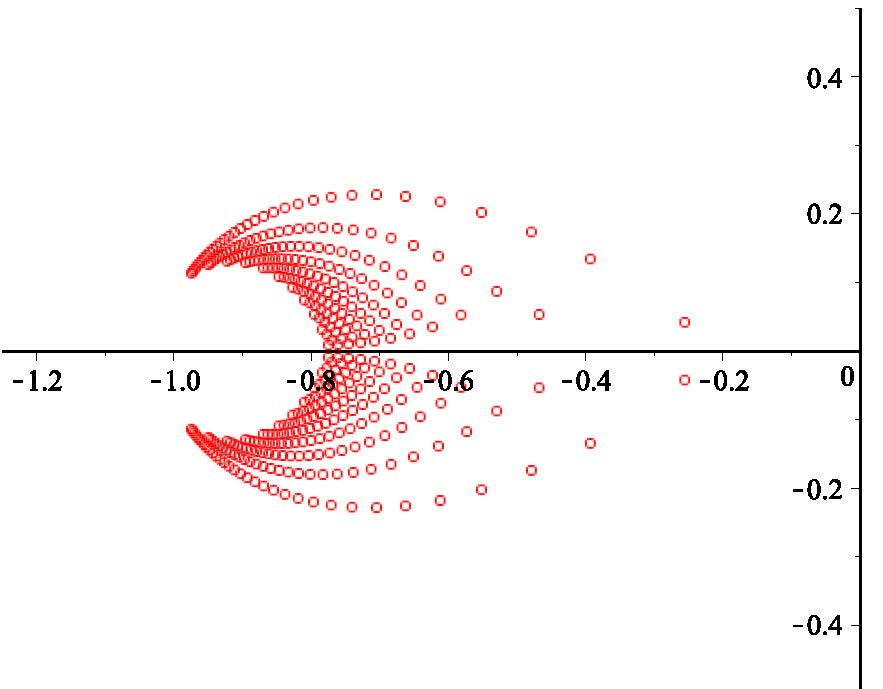}
\caption{Nonreal roots of the adjoint polynomial of $H_{n,n}^2$ for $n=1,\dots , 35$.}
\label{nonrealFigureH(n,n,2)}
\end{center}
\end{figure}

\begin{figure}[htbp]
\begin{center}
\includegraphics[scale=0.5]{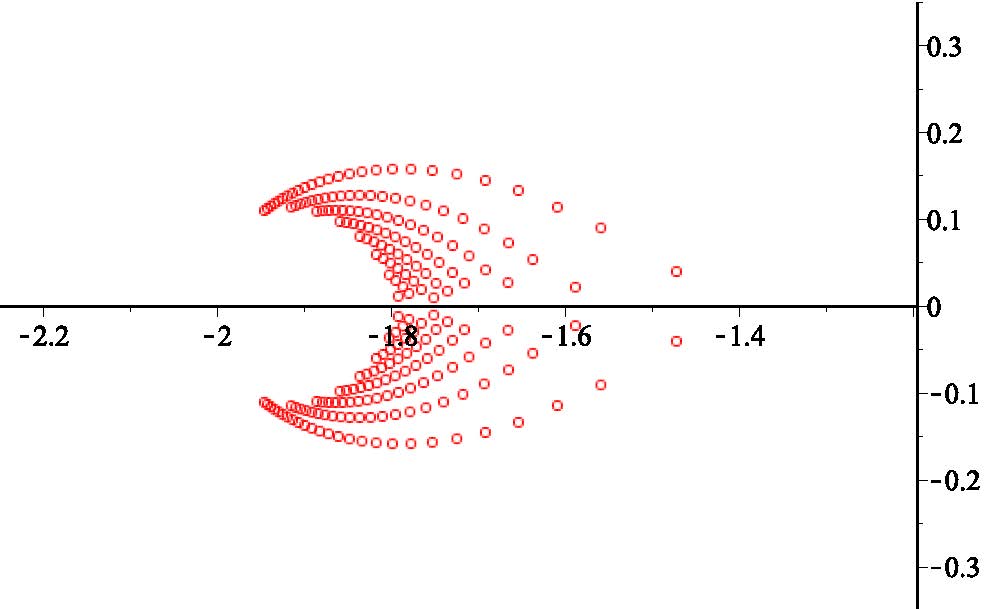}
\caption{Nonreal roots of the adjoint polynomial of $H_{n,n}^3$ for $n=1,\dots , 30$.}
\label{nonrealFigureH(n,n,3)}
\end{center}
\end{figure}

We leave off with three tantalizing questions. First, can the imaginary part of a $\sigma$-root be unbounded? Secondly, can there be sigma roots in the right-half plane, or is the polynomial always {\em Hurwitz stable}? And more generally, what is the closure of the $\sigma$-roots in the complex plane? 

\vskip0.4in
\noindent {\bf \large Acknowledgments:} 
This research was partially supported by a grant from NSERC.

\bibliographystyle{elsarticle-num}
\bibliography{<your-bib-database>}

\end{document}